\titleformat*{\section}{\large\bfseries}
\newtheoremstyle{MyTheorem}
{\topsep}       
{\topsep}       
{\itshape}  
{}          
{\bfseries}  
{.}         
{5pt plus 1pt minus 1pt}      
{}          
\theoremstyle{MyTheorem}
\newtheorem{theorem}{Theorem}
\newtheorem{cond}[theorem]{Condition}
\newcommand\IPst[0]{\text{s.t.\;\,}}
\newcommand\IPmin[0]{\min\;}
\newcommand\cref[1]{(\ref{#1})}
\begin{document}

\title{Compact Linearization for Binary Quadratic Problems subject to Linear Equations}

\author{Sven Mallach}

\affil{Institut f\"ur Informatik \authorcr Universit\"at zu K\"oln, 50923 K\"oln, Germany}
\date{December 15, 2017}

\maketitle

\begin{abstract}
In this paper it is shown that the compact linearization approach, that has been previously
proposed only for binary quadratic problems with assignment constraints, can be generalized
to arbitrary linear equations with positive coefficients which 
considerably enlarges its applicability. We discuss special cases of prominent quadratic
combinatorial optimization problems where the obtained compact linearization yields
a continuous relaxation that is provably as least as strong as the one obtained with an
ordinary linearization.
\end{abstract}

\section{Introduction}

Since linearizations of quadratic and, more generally, polynomial programming problems, enable
the application of well-studied mixed-integer linear programming techniques, they have been an
active field of research since the 1960s.
The seminal idea to model binary conjunctions using additional (binary) variables
is attributed to~\citet{Fortet59,Fortet60} and addressed by~\citet{HammerRudeanu}.
This method, that is also proposed in succeeding works by~\citet{Balas64}, \citet{Zangwill}
and \citet{Watters}, and further discussed by~\citet{GloverWoolsey73}, requires two
inequalities per linearization variable. Only shortly thereafter, \citet{GloverWoolsey74}
found that the same effect can be achieved using \emph{continuous} linearization
variables when replacing one of these inequalities with two different ones.
The outcome is a method that~is until today regarded as \lq the standard
linearization technique\rq\ and where, in the binary quadratic case, each product
$x_{i} x_{j}$ is modeled using a variable $y_{ij} \in [0,1]$ and three constraints:
\begin{align}
  y_{ij}  &\le\;  x_{i} && \label{pck:stdlin1} \\
  y_{ij}  &\le\;  x_{j} && \label{pck:stdlin2} \\
  y_{ij}  &\ge\;  x_{i} + x_{j} -1 && \label{pck:stdlin3}
\end{align}

While this approach is widely applicable, it is considered to be rather \lq weak\rq\ in
the sense that the inequalities couple the three involved variables only very loosely.
Depending on the concrete problem formulation, this may result in linear programming
relaxations that yield unsatisfactory bounds on the respective objective function.

Several attempts have been made to develop more sophisticated techniques
that exploit the structure of a given problem formulation.
For example, linearization methods for binary quadratic problems (BQPs) where
all bilinear terms appear only in the objective function are proposed by~\citet{Glover75},
\citet{OralKettani92a,OralKettani92b}, as well as by \citet{CPP2004}, \citet{SheraliSmith2007},
\citet{FuriniTraversi}, and, for quadratic problems with general integer
variables, by~\cite{Billionnet2008}.
Adapted formulations for unconstrained BQPs
were addressed by~\citet{GueyeMichelon2009}, and \citet{HansenMeyer2009}.

In this paper, we are concerned with a compact linearization technique for BQPs
that comprise a collection $K$ of linear equations
$\sum_{i \in A_k} a_i^k x_i = b^k$, $k \in K$, where
$A_k$ is an index set specifying the binary variables on the left
hand side, $a_i^k \in \mathbb{R}^{>0}$ for all $i \in A_k$, and ${b^k \in \mathbb{R}^{>0}}$.
For ease of notation, let the overall set of binary variables in the BQP be indexed by
a set $N = \{ 1, \dots, n\}$ where $n \in \mathbb{N}^{>0}$.
Bilinear terms $y_{ij} = x_i x_j$, are permitted to occur in
the objective function as well as in the set of constraints. The technique proposed
is \emph{compact} in the sense that it typically adds less constraints than the
\lq standard\rq\ method. Its only prerequisite is that for each product $x_i x_j$
there exist indices $k, \ell$ such that $i \in A_k$ and ${j \in A_\ell}$, i.e., for
each variable being part of a bilinear term there is some linear equation involving
it. Without loss of generality, we also assume the bilinear terms to be collected in an
ordered set ${P \subset N \times N}$ such that $i \le j$ for each ${(i,j) \in P}$.
With an arbitrary set of $m \ge 0$ linear constraints
$C x + D y \ge e$ where $C \in \mathbb{R}^{m \times n}$ and
${D \in \mathbb{R}^{m \times |E|}}$, a general form of the mixed-integer
programs considered can be stated as: 
\begin{align}
\IPmin  &  c^Tx + d^Ty                       &      &            && \nonumber \\
\IPst   &  \sum_{i \in A_k} a_i^k x_{i}       &=\;   &  b^k       && \mbox{for all } k \in K      \label{bqp:eqn} \\
        &  C x + D y                         &\ge\; &  e         && \nonumber \\
	&  y_{ij}                            &=\;   & x_i x_j    && \mbox{for all } (i, j) \in P \label{bqp:bilinear} \\
	&  x_i                               &\in\; & \{0,1\}    && \mbox{for all } i \in N      \nonumber
\end{align}

This general mixed-integer program covers several NP-hard combinatorial
optimization problems.
For the case where the equations (\ref{bqp:eqn}) are assignment constraints, i.e.,
for all $k \in K$ we have $b^k = 1$ and $a^k_i = 1$ for all $i \in A_k$,
\citet{Liberti2007} and \citet{Mallach2017} proposed a compact linearization
approach that can be seen as a first level application of the so-called
reformulation-linearization technique by~\citet{SheraliAdamsRLT}.

In this paper, we show how this approach can be generalized to the
mentioned arbitrary linear equations with positive coefficients. This
broadens the applicability of the approach in a strong sense. We then
discuss under which circumstances the obtained compact linearization
is provably as least as strong as the one obtained with
the \lq standard linearization\rq. Finally, we highlight some prominent
example applications where previously found linearizations appear as
special cases of the proposed technique.

\section{Compact Linearization}\label{s:CmpLin}
The compact linearization approach for binary quadratic
problems with linear equations is as follows.
With each linear equation of type (\ref{bqp:eqn}), i.e., with each index
set $A_k$, we associate a corresponding index set $B_k \subseteq N$ such
that for each $j \in B_k$ the equation is multiplied with~$x_j$.
We thus obtain the new equations:
\begin{align}
    \sum_{i \in A_k} a_i^k x_i x_j     &=\;  b^k x_{j} && \mbox{for all } j \in B_k,\; \mbox{for all } k \in K \label{cmp:eqn0}
\end{align}

Each product $x_i x_j$ induced by any of the equations~\cref{cmp:eqn0} is then
replaced by a continuous linearization variable $y_{ij}$ (if $i \le j$) or
$y_{ji}$ (otherwise). We denote the set of bilinear terms created this way
with
$$Q = \{ (i,j) \mid i \le j \mbox { and } \exists k \in K: i \in A_k, j \in B_k \mbox{ or } j \in A_k, i \in B_k \}.$$
Rewriting the equations \cref{cmp:eqn0} using $Q$, we obtain the linearization equations:
\begin{align}
    \sum_{i \in A_k, (i,j) \in Q} a_i^k y_{ij}\ + \sum_{i \in A_k, (j,i) \in Q} a_i^k y_{ji}    &=\; b^k x_{j} && \mbox{for all } j \in B_k,\; \mbox{for all } k \in K \label{cmp:eqn1}
\end{align}

It is clear that the equations (\ref{cmp:eqn0}) are valid for
the original problem and thus as well the equations (\ref{cmp:eqn1})
whenever the introduced linearization variables take on consistent
values with respect to their two original counterparts, i.e.,
$y_{ij} = x_i x_j$ holds for all~${(i, j) \in Q}$.

In order to obtain a linearization of the original problem
formulation with the bilinear terms defined by the set $P$,
we need to choose the sets $B_k$ such that the induced set of
variables $Q$ will be equal to or contain $P$ as a subset. We
will discuss how to obtain a set $Q \supseteq P$ later, but
suppose for now that such a set $Q$ is already at hand.
We will show that a consistent
linearization is obtained if and only if the following two
conditions (for which $k = \ell$ is a valid choice) are satisfied:
\begin{cond} \label{cond:c1}
For each $(i,j) \in Q$, there is a $k \in K$
such that $i \in A_k$ and $j \in B_k$.
\end{cond}
\begin{cond} \label{cond:c2}
For each $(i,j) \in Q$, there is an $\ell \in K$
such that $j \in A_\ell$ and $i \in B_\ell$.
\end{cond}

\begin{theorem} \label{thm:main}
If Conditions~\ref{cond:c1} and~\ref{cond:c2} are satisfied, then
for any integer solution $x \in \{0,1\}^N$, the inequalities
$y_{ij} \le x_i$, $y_{ij} \le x_j$ and
$y_{ij} \ge x_i + x_j - 1$ hold for all $(i,j) \in Q$.
\end{theorem}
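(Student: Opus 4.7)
The plan is to split on the four Boolean cases for $(x_i, x_j)$ and derive all three inequalities from a single ``$y_{ij} = 0$'' observation applied in two symmetric directions.

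The observation I would establish first is: if either $x_i = 0$ or $x_j = 0$, then $y_{ij} = 0$. For $x_i = 0$, I invoke Condition~\ref{cond:c2} on the pair $(i,j) \in Q$ to obtain $\ell \in K$ with $j \in A_\ell$ and $i \in B_\ell$, and look at the equation \cref{cmp:eqn1} specialized to this $\ell$ and to the multiplying index $i \in B_\ell$. Its right-hand side is $b^\ell x_i = 0$. Since all $a_{i'}^\ell$ are strictly positive and the linearization variables are nonnegative, every summand on the left vanishes; the one containing $y_{ij}$, corresponding to $i' = j \in A_\ell$, sits in the first or second sum of \cref{cmp:eqn1} depending on whether $i \le j$. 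The case $x_j = 0$ is symmetric via Condition~\ref{cond:c1}.

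Three of the four cases are then immediate: if $x_i = 0$ or $x_j = 0$, then $y_{ij} = 0$ trivially satisfies $y_{ij} \le x_i$, $y_{ij} \le x_j$, and $y_{ij} \ge x_i + x_j - 1$ (the last because the right-hand side is at most zero). For the remaining case $x_i = x_j = 1$, the two upper bounds reduce to $y_{ij} \le 1$, which is the standing bound on the variable; it is the lower bound $y_{ij} \ge 1$ that requires real work. To get it, apply Condition~\ref{cond:c1} to obtain $k \in K$ with $i \in A_k$ and $j \in B_k$, and write out \cref{cmp:eqn1} for this $(k,j)$, whose right-hand side is $b^k$. By the opening observation, every summand whose partner $i'$ satisfies $x_{i'} = 0$ drops out, so the equation collapses to
\[
\sum_{i' \in A_k,\, x_{i'}=1} a_{i'}^k\, y_{i'j}^{\,\ast} \;=\; b^k,
\]
where $y_{i'j}^{\,\ast}$ denotes whichever of $y_{i'j}$, $y_{ji'}$ is defined. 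Since $x$ also satisfies the original equation \cref{bqp:eqn}, one has $\sum_{i' \in A_k,\, x_{i'}=1} a_{i'}^k = b^k$, and subtraction yields $\sum_{i' \in A_k,\, x_{i'}=1} a_{i'}^k\,(y_{i'j}^{\,\ast} - 1) = 0$. Every summand is nonpositive (positive coefficient times a $\le 0$ term), so every summand must be zero; in particular $y_{ij} = 1 = x_i + x_j - 1$.

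The main care in writing this up lies in the index bookkeeping for the opening observation: checking, according to whether $i \le j$ or $j < i$, that the variable $y_{ij}$ actually appears in one of the two sums making up \cref{cmp:eqn1}. Everything else reduces to a clean sign argument using positivity of the coefficients and the natural $[0,1]$ bound on the linearization variables.
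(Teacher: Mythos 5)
Your proof is correct and takes essentially the same route as the paper's: both first use Conditions~\ref{cond:c1} and~\ref{cond:c2} together with the positivity of the coefficients $a_h^k$ and the bounds $0 \le y \le 1$ to show that $y_{ij}=0$ whenever $x_i=0$ or $x_j=0$, and then, for $x_i=x_j=1$, compare the linearization equation with the original equation $\sum_{h\in A_k} a_h^k x_h = b^k$ to force $y_{ij}=1$. The only difference is presentational: you conclude directly that every surviving summand must equal one, whereas the paper supposes $y_{ij}<1$ and derives a contradiction from the excess $(1-y_{ij})a_i^k$ that the remaining summands cannot supply.
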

\begin{proof}
Let $(i,j) \in Q$.
By Condition~\ref{cond:c1}, there is a $k \in K$ such that
$i \in A_k$, $j \in B_k$ and hence the equation
\begin{align}
    \sum_{h \in A_k, (h,j) \in Q} a_h^k y_{hj} + \sum_{h \in A_k, (j,h) \in Q} a_h^k y_{jh}   &=\; b^k x_{j} && \tag{$*$} \label{eqn:rhsj_proof}
\end{align}
exists and has $y_{ij}$ on its left hand side. Since
$a_h^k > 0$ for all $h \in A_k$ and $0 \le y_{ij} \le 1$,
it establishes that $y_{ij} = 0$ whenever $x_j = 0$.
Similarly, by Condition~\ref{cond:c2}, there is an $\ell \in K$ such that
$j \in A_\ell$, $i \in B_\ell$ and hence the equation
\begin{align*}
    \sum_{h \in A_\ell, (h,i) \in Q} a_h^\ell y_{hi} + \sum_{h \in A_\ell, (i,h) \in Q} a_h^\ell y_{ih}   &=\; b^\ell x_{i} &&  \tag{$**$} \label{eqn:rhsi_proof}
\end{align*}
exists and has $y_{ij}$ on its left hand side. Since
$a_h^\ell > 0$ for all $h \in A_\ell$, $0 \le y_{ij} \le 1$
it establishes that $y_{ij} = 0$ whenever $x_i = 0$.

Within a framework that constructs
a linearization only by means of equations of type (\ref{cmp:eqn1}), there
is no other way of establishing $y_{ij} \le x_j$ and $y_{ij} \le x_i$ than
by satisfying conditions~\ref{cond:c1} and~\ref{cond:c2} -- showing their
\emph{necessity}. We will now continue with showing also their \emph{sufficiency}.

Since $y_{ij} = 0$ whenever $x_i = 0$ or $x_j = 0$, the inequality
$y_{ij} \ge x_i + x_j - 1$ is satisfied as well in this case 
So let now $x_i = x_j = 1$. Then the right hand sides of~(\ref{eqn:rhsj_proof})
and~(\ref{eqn:rhsi_proof}) are equal to $b^k$ and $b^\ell$ respectively.
The variable $y_{ij}$ (is the only one that) occurs on the left hand sides
of both of these equations. If $y_{ij} = 1$, this is consistent and correct.
So suppose that $y_{ij} < 1$ which implies that, in the case of
equation~(\ref{eqn:rhsj_proof}), we have:
\begin{align}
    \sum_{h \in A_k, (h,j) \in Q, h \neq i} a_h^k y_{hj} + \sum_{h \in A_k, (j,h) \in Q, h \neq i} a_h^k y_{jh}  &=\; b^k \hspace{-3pt} \underbrace{x_j}_{=1} -\;\; a_i^k \hspace{-1pt} \underbrace{y_{ij}}_{< 1} > b^k - a_i^k. && \tag{$*'$} \label{eqn:rhsj_proof2}
\end{align}

At the same time, however, we have $\sum_{h \in A_k, h \neq i} a_h^k x_h = b^k - a_i^k$ with $x_h \in \{0,1\}$.
In order for the equation~(\ref{eqn:rhsj_proof}) to be satisfied, an additional amount
of $(1 -y_{ij})a_i^k > 0$ must be contributed by the other summands on the left hand side of~(\ref{eqn:rhsj_proof2}).
This implies, however, that there must be some $h \in A_k$, $h \neq i$, such that $y_{hj} > 0$
(or $y_{jh} > 0$) while $x_h = 0$ -- which is impossible since the
conditions~\ref{cond:c1} and~\ref{cond:c2} are also established for these variables.
An analogous result can be stated for equation~(\ref{eqn:rhsi_proof}).
\end{proof}

Unfortunately, in the general case, Theorem~\ref{thm:main} cannot be restated for fractional
solutions $x \in [0,1]^N$ and thus we cannot conclude from the proof
that the compact linearization yields a linear programming relaxation which
is provably as least as tight as the one obtained with the \lq standard linearization\rq.
This is in contrast to the special case where the equations (\ref{bqp:eqn}) are assignment
constraints, i.e., $a^k_i = 1$ for all $i \in A_k$ and $b^k = 1$ for all $k \in K$.
As stated in the introduction, the compact linearization approach was originally proposed
for this case in~\citet{Liberti2007} and~\citet{Mallach2017}. Accidentally, the proof
in~\citet{Mallach2017} lacks a verification that $y_{ij} \ge x_i + x_j - 1$ holds
for all $(i,j) \in Q$ also in the fractional case that we now catch up on.

\begin{theorem} \label{thm:ass}
If, for all $k \in K$, $a^k_i = 1$ for all $i \in A_k$ and $b^k = 1$, and the
Conditions~\ref{cond:c1} and~\ref{cond:c2} are satisfied, then for any
solution $x \in [0,1]^N$, the inequalities
$y_{ij} \le x_i$, $y_{ij} \le x_j$ and
$y_{ij} \ge x_i + x_j - 1$ hold for all $(i,j) \in Q$.
\end{theorem}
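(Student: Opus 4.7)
My plan is to leverage the fact that, in the assignment setting $a_i^k=b^k=1$, the upper-bound argument from the proof of Theorem~\ref{thm:main} already carries over verbatim to fractional $x$: the coefficient ratio $b^k/a_i^k$ that it implicitly produces degenerates to $1$. Concretely, fix $(i,j)\in Q$ and let $k\in K$ be the index supplied by Condition~\ref{cond:c1}, so that $i\in A_k$ and $j\in B_k$. Equation~(\ref{cmp:eqn1}) then reads
\[
y_{ij} \;+\; \sum_{h \in A_k\setminus\{i\},\,(h,j)\in Q} y_{hj} \;+\; \sum_{h \in A_k\setminus\{i\},\,(j,h)\in Q} y_{jh} \;=\; x_j,
\]
and nonnegativity of the remaining summands yields $y_{ij}\le x_j$ for any $x\in[0,1]^N$. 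Applying Condition~\ref{cond:c2} symmetrically gives $y_{ij}\le x_i$. The point is that these two upper bounds now hold for \emph{every} pair in $Q$ and for fractional $x$, not just integer $x$.

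With the upper bounds in hand, the lower inequality $y_{ij}\ge x_i+x_j-1$ should follow by rearranging the same equation and bounding the residual sum term-by-term. Solving for $y_{ij}$ gives
\[
y_{ij} \;=\; x_j \;-\; \sum_{h \in A_k\setminus\{i\}} \hat y_{hj},
\]
where $\hat y_{hj}$ denotes whichever of $y_{hj}$ or $y_{jh}$ belongs to $Q$ (this is well-defined because $h\in A_k$ and $j\in B_k$ force the corresponding ordered pair into $Q$). Each such $\hat y_{hj}$ is a linearization variable for a pair in $Q$, so the upper bound from the previous paragraph applies to it and gives $\hat y_{hj}\le x_h$. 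Substituting and then invoking the \emph{original} assignment equation $\sum_{h\in A_k} x_h=1$ yields
\[
y_{ij} \;\ge\; x_j \;-\; \sum_{h\in A_k\setminus\{i\}} x_h \;=\; x_j - (1-x_i) \;=\; x_i+x_j-1,
\]
as required.

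The main obstacle is purely notational: the ordered convention $i\le j$ on $Q$ splits~(\ref{cmp:eqn1}) into the two sums shown above, so one must verify that the rearrangement is unambiguous and that the variable $\hat y_{hj}$ that appears in the residual sum is literally the same variable whose upper bound $\hat y_{hj}\le x_h$ was derived from Condition~\ref{cond:c2} applied to the pair $(h,j)$ (ordered). Once this bookkeeping is done, the argument uses only nonnegativity of $y$, the linearization equations~(\ref{cmp:eqn1}), and the original constraint $\sum_{h\in A_k} x_h=1$; integrality of $x$ is never invoked, which is precisely what enables the extension beyond Theorem~\ref{thm:main}. The fact that the assignment case is special traces back to the unit coefficients collapsing $b^k/a_i^k$ to $1$; without that collapse, the first paragraph's upper bounds would read $y_{ij}\le (b^k/a_i^k)x_j$, and the substitution in the second paragraph would no longer telescope with the original constraint.
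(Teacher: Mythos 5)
Your proof is correct and is essentially identical to the paper's: both derive $y_{ij}\le x_j$ and $y_{ij}\le x_i$ from the two linearization equations supplied by Conditions~\ref{cond:c1} and~\ref{cond:c2}, and both obtain $y_{ij}\ge x_i+x_j-1$ by bounding the remaining summands of equation~(\ref{cmp:eqn1}) by $\sum_{h\in A_k, h\neq i} x_h = 1-x_i$ using those same upper bounds together with the original assignment constraint. The bookkeeping you flag about which of $y_{hj}$ or $y_{jh}$ appears, and which Condition yields $\hat y_{hj}\le x_h$, is handled correctly.
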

\begin{proof}
Let $(i,j) \in Q$.
By Condition~\ref{cond:c1}, there is a $k \in K$ such that
$i \in A_k$, $j \in B_k$ and hence the equation
\begin{align}
    \sum_{h \in A_k, (h,j) \in Q} y_{hj} + \sum_{h \in A_k, (j,h) \in Q} y_{jh}   &=\; x_{j} && \tag{$***$} \label{eqn:rhsj_proof3}
\end{align}
exists, has $y_{ij}$ on its left hand side, and thus establishes $y_{ij} \le x_j$.
Similarly, by Condition~\ref{cond:c2}, there is an $\ell \in K$ such that
$j \in A_\ell$, $i \in B_\ell$ and thus the equation
\begin{align*}
    \sum_{h \in A_\ell, (h,i) \in Q} y_{hi} + \sum_{h \in A_\ell, (i,h) \in Q} y_{ih}   &=\; x_{i} &&  \tag{$****$} \label{eqn:rhsi_proof3}
\end{align*}
exists, has $y_{ij}$ on its left hand side, and thus establishes $y_{ij} \le x_i$.

\newpage

To show that $y_{ij} \ge x_i + x_j - 1$, consider equation~(\ref{eqn:rhsj_proof3}) in
combination with its original counterpart $\sum_{h \in A_k} x_h = 1$.
For any $y_{hj}$ (or $y_{jh}$) in~(\ref{eqn:rhsj_proof3}), the conditions~\ref{cond:c1}
and~\ref{cond:c2} assure that there is an equation establishing $y_{hj} \le x_h$ ($y_{jh} \le x_h$).
Thus we have
\begin{align}
    \sum_{h \in A_k, (h,j) \in Q, h \neq i} y_{hj} + \sum_{h \in A_k, (j,h) \in Q, h \neq i} y_{jh}   &\le\; \sum_{h \in A_k, h \neq i} x_{h} = 1 - x_i && \nonumber 
\end{align}

Applying this bound within equation~(\ref{eqn:rhsj_proof3}), we obtain:
$$y_{ij} + \underbrace{\sum_{h \in A_k, (h,j) \in Q, h \neq i} y_{hj} + \sum_{h \in A_k, (j,h) \in Q, h \neq i} y_{jh}}_{\le 1 - x_i}  = x_j\; \Leftrightarrow y_{ij} \ge x_i + x_j - 1$$
    \vspace{-0.2cm}
\end{proof}

If all the right hand sides of the original equations are equal to two
and the products to be induced are exactly those given by $A_k \times A_k$
for all $k \in K$, we obtain another important special case where the equations
induced by conditions~\ref{cond:c1} and~\ref{cond:c2} imply the
inequalities~(\ref{pck:stdlin1}), (\ref{pck:stdlin2}), and~(\ref{pck:stdlin3})
also for fractional solutions. We will see an example application where
this case occurs in practice in Sect.~\ref{ss:QTSP}.

\begin{theorem} \label{thm:tsp}
If, for all $k \in K$, (i) $a^k_i = 1$ for all $i \in A_k$, (ii) $b^k = 2$, (iii) $B_k = A_k$, and
the Conditions~\ref{cond:c1} and~\ref{cond:c2} are satisfied, then there is a compact linearization
such that, for any solution $x \in [0,1]^N$, the inequalities
$y_{ij} \le x_i$, $y_{ij} \le x_j$ and
$y_{ij} \ge x_i + x_j - 1$ hold for all $(i,j) \in Q$, $i \neq j$.
\end{theorem}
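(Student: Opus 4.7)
The plan is to first specify the concrete compact linearization and then prove the three inequalities from it. Since $x_j^2 = x_j$ for binary $x_j$, I would collapse every diagonal variable $y_{jj}$ by identifying it with $x_j$. Under the assumptions $a_i^k = 1$, $b^k = 2$ and $B_k = A_k$, equation~\cref{cmp:eqn1} reads $\sum_{i \in A_k} y_{ij} = 2 x_j$ for every $j \in A_k$ and $k \in K$, and after the diagonal substitution this simplifies to
$$\sum_{i \in A_k,\, i \ne j} y_{ij} \;=\; x_j \qquad \text{for all } j \in A_k,\; k \in K. \quad (\star)$$
Together with $y_{ij} \ge 0$, this family would constitute the compact linearization I work with.

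The upper bounds come out immediately: given $(i,j) \in Q$ with $i \ne j$, Condition~\ref{cond:c1} provides some $k$ with $i,j \in A_k$, and $(\star)$ for this $k$ and index $j$ isolates $y_{ij}$ on the left-hand side, so non-negativity of the remaining summands yields $y_{ij} \le x_j$. Condition~\ref{cond:c2} analogously gives $y_{ij} \le x_i$, in complete analogy to the proof of Theorem~\ref{thm:ass}.

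The hard part will be the lower bound $y_{ij} \ge x_i + x_j - 1$. Using only the two equations $(\star)$ at $j$ and at $i$ together with the upper bounds above, one would only obtain $y_{ij} \ge x_i + 2x_j - 2$ (and symmetrically $\ge 2x_i + x_j - 2$), strictly weaker than the target whenever $x_i,x_j < 1$. To close this gap I would invoke equation $(\star)$ for \emph{each} remaining index $h \in A_k \setminus \{i,j\}$: each such equation has right-hand side $x_h$ and contains both $y_{ih}$ and $y_{jh}$ on its left-hand side, so non-negativity of the other terms gives
$$y_{ih} + y_{jh} \;\le\; x_h \qquad \text{for every } h \in A_k \setminus \{i,j\}.$$
Isolating $y_{ij}$ in the $j$-equation and in the $i$-equation and adding the two resulting identities produces $2 y_{ij} = x_i + x_j - \sum_{h \in A_k \setminus \{i,j\}} (y_{hi} + y_{hj})$; combining this with the pairwise bound just derived and with $\sum_{h \in A_k \setminus \{i,j\}} x_h = 2 - x_i - x_j$ (from the original equation $\sum_{h \in A_k} x_h = 2$) collapses the right-hand side to $2(x_i + x_j - 1)$, as required.

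The nontrivial ingredient is thus the \emph{triple} use of the compact equations -- one at $j$, one at $i$, and one at each remaining $h \in A_k$ -- which mirrors the role played by the single equation $\sum_h x_h = 1$ in the assignment case of Theorem~\ref{thm:ass} but, because $b^k = 2$, only yields the desired bound after the pairwise combination of the auxiliary identities. The diagonal substitution $y_{jj} = x_j$, which is what makes the reduction to an ``assignment-like'' equation $(\star)$ possible, is the other place where the hypotheses $a_i^k = 1$ and $B_k = A_k$ are genuinely used.
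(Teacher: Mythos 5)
Your proof is correct, and its setup coincides with the paper's: you eliminate the diagonal variables via $y_{jj}=x_j$ to reduce (\ref{cmp:eqn1}) to the assignment-like equations $(\star)$ (exactly the paper's equations (\ref{cmp:sp2_2})), and you read off the two upper bounds from Conditions~\ref{cond:c1} and~\ref{cond:c2} just as in Theorem~\ref{thm:ass}. The lower bound, however, is closed by a different key lemma. The paper sums the equations (\ref{cmp:sp2_2}) over all $j \in A_k$ and combines with $\sum_{j \in A_k} x_j = 2$ to obtain the global identity $\sum_{j \in A_k}\sum_{h \in A_k,\, h<j} y_{hj} = 1$; it then writes $x_i + x_j$ as the sum of the $i$- and $j$-equations, extracts one copy of $y_{ij}$, and observes that the remaining terms are pairwise distinct members of that unit-sum family, hence together at most $1$. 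You instead derive from the equation at each $h \in A_k\setminus\{i,j\}$ the pairwise bound $y_{ih}+y_{jh} \le x_h$ (a clique-type strengthening of the individual upper bounds), and combine the identity $2y_{ij} = x_i + x_j - \sum_{h \ne i,j}(y_{hi}+y_{hj})$ with $\sum_{h\ne i,j} x_h = 2 - x_i - x_j$. Both arguments ultimately discard the same nonnegative terms (those $y_{gh}$ with $\{g,h\}\cap\{i,j\}=\emptyset$) and each uses the original degree constraint exactly once, so they are of equal strength; yours is somewhat more local, since it never needs the paper's observation that, apart from $y_{ij}$, no variable occurs in both the $i$- and the $j$-equation, whereas the paper's route yields the intermediate identity $\sum_{h<j} y_{hj}=1$ as a structural fact of independent interest. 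Your preliminary remark that the two equations at $i$ and $j$ alone give only $y_{ij} \ge x_i + 2x_j - 2$ correctly pinpoints why this extra ingredient is needed.
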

\begin{proof}
Due to (iii), the induced equations~(\ref{cmp:eqn1}) look like:
\begin{align}
    y_{jj} + \sum_{h \in A_k, h < j} y_{hj}\ + \sum_{h \in A_k, j < h} y_{jh}    &=\; 2 x_{j} && \mbox{for all } j \in A_k,\; \mbox{for all } k \in K \nonumber 
\end{align}

Since $y_{jj}$ shall take on the same value as $x_j$, we may eliminate $y_{jj}$ on
the left and once subtract $x_j$ on the right. We obtain:
\begin{align}
   \sum_{h \in A_k, h < j} y_{hj}\ + \sum_{h \in A_k, j < h} y_{jh}    &=\; x_{j} && \mbox{for all } j \in A_k,\; \mbox{for all } k \in K \label{cmp:sp2_2}
\end{align}

These equations establish inequalities~(\ref{pck:stdlin1}) and (\ref{pck:stdlin2}) for all $y_{ij}$, $(i,j) \in Q$, $i \neq j$.
Combining them with the original equations $\sum_{j \in A_k} x_j = 2$ yields:
\begin{align}
    2 = \sum_{j \in A_k} x_j = \sum_{j \in A_k} \big( \sum_{h \in A_k, h < j} y_{hj}\ + \sum_{h \in A_k, j < h} y_{jh} \big) = 2 * \sum_{j \in A_k} \sum_{h \in A_k, h < j} y_{hj} \nonumber 
\end{align}

It follows that $\sum_{j \in A_k} \sum_{h \in A_k, h < j} y_{hj} = 1$ holds even when $x$ is fractional.
Since we have for all $y_{ij}$, $(i,j) \in Q$, $i \neq j$ that
$\{i,j\} \subseteq A_k$ and, except $y_{ij}$, no two variables in the equations~(\ref{cmp:sp2_2}) expressed for $i$ and for $j$ coincide, we conclude:
\begin{align*}
    x_i + x_j &=  \sum_{h \in A_k, i < h} y_{ih}\ + \sum_{h \in A_k, h < i} y_{hi} + \sum_{h \in A_k, j < h} y_{jh}\ + \sum_{h \in A_k, h < j} y_{hj}\\
              &=  y_{ij} + \underbrace{\sum_{h \in A_k, i < h \neq j} y_{ih}\ + \sum_{h \in A_k, j \neq h < i} y_{hi} + \sum_{h \in A_k, j < h} y_{jh}\ + \sum_{h \in A_k, h < j} y_{hj}}_{\le 1}\\
              &\le\; y_{ij} + 1
\end{align*}
\end{proof}

\newpage

For the general setting of linear equations, we shall now
clarify on how to obtain a set $Q \supseteq P$
as compact as possible and how to induce also a minimum
number of additional linearization equations by minimizing
$\sum_{k\in K} |B_k|$. This works in the same fashion as
presented in~\citet{Mallach2017} for the case of
assignment constraints.
While the approach possibly involves creating more linearization
variables as originally demanded by the set $P$, the number of
equations will typically be considerably smaller than it would
be with the \lq standard\rq\ approach as is discussed
by~\citet{Liberti2007}. Depending on $P$ and the overlap among
the sets $A_k$, $k \in K$, the conditions~\ref{cond:c1}
and~\ref{cond:c2} impose a certain minimum on the sum of
cardinalities $\sum_{k \in K} |B_k|$ and hence on the number of equations.
Different solutions achieving this minimum may lead to different
cardinalities of $|Q|$. In general, it is even possible that a
minimum $|Q|$ can only be achieved by adding more than a minimum
number of equations. But, in practice, this is seldom the case as
adding equations rather induces more variables.

A consistent linearization of minimum
size can be computed by solving the following mixed-integer
program that reduces to a linear program with
totally unimodular constraint matrix whenever
$A_k \cap A_\ell = \emptyset$, for all $k,\ell \in K$, $\ell \neq k$.
\begin{align}
    \IPmin  &  w_{eqn} \bigg( \sum_{k \in K} \sum_{1 \le i \le n} z_{ik} \bigg) + w_{var} \bigg( \sum_{1 \le i \le n}\sum_{i \le j \le n} f_{ij} \bigg)    \span \span  \span\span \nonumber \\
\IPst   &  f_{ij}                                         &=\;   & 1          && \mbox{for all } (i, j) \in P  \label{minB:EinF0}  \\
        &  f_{ij}                                         &\ge\; & z_{jk}     && \mbox{for all } k \in K, i \in A_k, j \in N, i \le j \label{minB:EinF3a} \\
	&  f_{ji}                                         &\ge\; & z_{jk}     && \mbox{for all } k \in K, i \in A_k, j \in N, j < i \label{minB:EinF3b} \\
        &  \sum_{k: i \in A_k} z_{jk} \qquad\qquad\qquad  &\ge\; & f_{ij}     && \mbox{for all } 1 \le i \le j \le n \label{minB:EinF1} \\
	&  \sum_{k: j \in A_k} z_{ik}                     &\ge\; & f_{ij}     && \mbox{for all } 1 \le i \le j \le n \label{minB:EinF2} \\
	&  f_{ij}                                         &\in\; & [0,1]    && \mbox{for all } 1 \le i \le j \le n \nonumber \\
        &  z_{ik}                                         &\in\; & \{0,1\}    && \mbox{for all } k \in K, 1 \le i \le n  \nonumber
\end{align}

The formulation involves binary variables $z_{ik}$ to be equal to $1$ if $i \in B_k$
and equal to zero otherwise. Further, to account for whether $(i,j) \in Q$, there
is a (continuous) variable $f_{ij}$ for all $1 \le i \le j \le n$ that will be
equal to $1$ in this case and $0$ otherwise.
The constraints \cref{minB:EinF0} fix those $f_{ij}$ to $1$ where the corresponding
pair $(i,j)$ is contained in $P$. Whenever some $j \in N$
is assigned to some set $B_k$, then we induce the corresponding products $(i,j) \in Q$
or $(j,i) \in Q$ for all $i \in A_k$ which is established by \cref{minB:EinF3a} and
\cref{minB:EinF3b}. Finally, if $(i,j) \in Q$, then we require
Conditions~\ref{cond:c1} and~\ref{cond:c2} to be satisfied, namely that there is
a $k \in K$ such that $i \in A_k$ and $j \in B_k$ \cref{minB:EinF1} and a
(possibly different) $k \in K$ such that $j \in A_k$ and $i \in B_k$ \cref{minB:EinF2}.
The weights $w_{var}$ and $w_{eqn}$ of the objective function allow to
find the best compromise between a minimum number of variables and a minimum number of
equations. A rational choice would be to set $w_{var} = 1$ and
$w_{eqn} > \max_{k \in K} |A_k|$. This results in a solution with a minimum number
of additional equations that, among these, also induces a minimal number of
additional variables.

In the mentioned case of disjoint supports, i.e.,
$A_k \cap A_\ell = \emptyset$, for all $k,\ell \in K$, ${\ell \neq k}$,
the unique minimum-cardinality sets $B_k$ satisfying conditions~\ref{cond:c1}
and~\ref{cond:c2} for all ${(i,j)\in Q}$ can as well be computed by a
combinatorial algorithm as presented in~\cite{Mallach2017}.

So while it is possible to carry out the linearization problem in a very
general form algorithmically (e.g.~as part of a preprocessing step of a solver),
the (most) compact linearization associated to a particular problem formulation
is typically \lq recognized\rq\ easily by hand as we will see exemplary
in the following section.

\section{Applications}\label{s:Apps}

In this section, we highlight some prominent quadratic combinatorial
optimization problems where linearizations found earlier appear as
special cases of the proposed compact linearization technique.

\subsection{Quadratic Assignment Problem}

As discussed by \citet{Liberti2007}, the linearization of
the Koopmans-Beckmann formulation for the quadratic assignment problem
as presented by~\citet{FriezeYadegar83} is exactly the one that results
when applying the compact linearization technique to it.

\subsection{Symmetric Quadratic Traveling Salesman Problem}\label{ss:QTSP}

In the form as discussed by \citet{FischerH13},
the symmetric quadratic traveling salesman problem asks for a tour $T$
in a complete undirected graph ${G=(V,E)}$ such that the objective
$\sum_{\{i,j,k\} \subseteq V, j \neq i < k \neq j } c_{ijk} x_{ij} x_{jk}$
(where $x_{ij} = 1$ if $\{i,j\} \in T$) is minimized.
Consider the following mixed-integer programming formulation for this problem
that is oriented at the integer programming formulation for the linear traveling
salesman problem by~\citet{Dantzig54}.
\begin{align}
\IPmin & \sum_{\{i,j,k\} \subseteq V, j \neq i < k \neq j } c_{ijk} y_{ijk} \span\span\span\span \nonumber \\
\IPst  & \sum_{\{i,j\} \in E} x_{ij}  &=\;   & 2     &&\mbox{for all}\; i \in V \label{qtsp:degree}\\
       & x(E(W))                                 &\le\; & |W|-1  &&\mbox{for all}\; W \subsetneq V,\; 2 \le |W| \le |V| - 2 \nonumber \\ 
       & y_{ijk}                                 &=\; & x_{ij} x_{jk} &&\mbox{for all}\; \{i,j,k\} \subseteq V, j \neq i < k \neq j \label{qtsp:lin}\\
       & x_{ij}                                 &\in\; & \{0,1\}     &&\mbox{for all}\; \{i,j\} \in E \nonumber
\end{align}

In the context of the compact linearization approach proposed, we
consider the linear equations~(\ref{qtsp:degree}) where we have
$K = V$, $A_k = \{ jk \mid j < k \mbox { and } \{j,k\} \in E \}$,
$a_i^k = 1$ for all $i \in A_k$ and $b^k = 2$ for all $k \in K$.
Since we are interested in the bilinear terms of the form as in~(\ref{qtsp:lin}),
i.e.~each pair of edges with common index $j$, we need to set $B_k = A_k$ for
all $k \in K$ in order to satisfy both conditions~\ref{cond:c1} and~\ref{cond:c2}
for each such pair.
We thus comply to the requirements of the special case addressed in Theorem~\ref{thm:tsp}
and obtain the equations:
\begin{align}
    \sum_{\{i,j\} \in E}  x_{ij} x_{jk}     &=\; 2 x_{jk} && \mbox{for all } \{j,k\} \in E,\; \mbox{for all } j \in V \nonumber 
\end{align}

After introducing linearization variables with indices ordered as desired, these are resolved as:
\begin{align}
    \sum_{\{i,j,k\} \subseteq V, j \neq i \le k \neq j} y_{ijk} &=\; 2 x_{jk} && \mbox{for all } \{j,k\} \in E,\; \mbox{for all } j \in V \nonumber 
\end{align}

Each of these equations induces one variable more than originally demanded which
is $y_{kjk}$ as the linearized substitute for the square term $x_{jk} x_{jk}$.
Since this product is zero if $x_{jk} = 0$ and equal to one if $x_{jk} = 1$,
we may safely subtract $y_{kjk}$ from the left and $x_{jk}$ from the right hand side
and obtain
\begin{align}
    \sum_{\{i,j,k\} \subseteq V, j \neq i < k \neq j} y_{ijk} &=\; x_{jk} && \mbox{for all } \{j,k\} \in E,\; \mbox{for all } j \in V \nonumber
\end{align}
which are exactly the linearization constraints as proposed by~\citet{FischerH13}.

\section{Conclusion}\label{s:Concl}

We showed that the compact linearization approach can be
applied not only to binary quadratic problems with assignment
constraints, but to those with arbitrary linear equations with
positive coefficients. We discussed two special cases under which
the continuous relaxation of the obtained compactly linearized
problem formulation is provably as least as strong as the one
obtained with an ordinary linearization. This is true for the
original case of assignment constraints, but also for another
setting where the right hand sides of the equations are equal to two.
Finally, we highlighted previously found linearizations that appear
as special cases of the proposed compact linearization technique, namely
the one by Frieze and Yadegar found for the quadratic assignment problem,
and the one by Fischer and Helmberg for the symmetric quadratic traveling
salesman problem.

\section*{Acknowledgments}

I would like to thank Anja Fischer for bringing up the question
whether the original approach for assignment constraints might
be possibly generalized to arbitrary right hand sides and
pointing me to her research about the symmetric quadratic traveling
salesman problem.

\bibliographystyle{spbasic}
\bibliography{bibfile}

\end{document}